\newtheorem{proposition}{Proposition}
\newtheorem{theorem}{Theorem}
\newtheorem{lemma}{Lemma}
\newcommand{\sylvain}[1]{{\color{black}#1}}
\journal{European Journal of Operations Research}
\begin{document}

\begin{frontmatter}


\title{Approximate Stability of Subadditive Games and Traveling Salesman Games} 


\author[1,2]{Nicolas Besson Niebles \corref{cor1}} 
\ead{nicolas.besson-niebles@grenoble-inp.fr}

\author[2]{Sylvain Bouveret}
\ead{sylvain.bouveret@grenoble-inp.fr}

\author[1]{Nadia Brauner}
\ead{nadia.brauner@grenoble-inp.fr}

\author[3]{Nicolas Brulard} 
\ead{nbrulard@holiag.fr}
\affiliation[1]{organization = {Univ. Grenoble Alpes, CNRS, Grenoble INP, GSCOP},
city = {Grenoble}, 
postcode = {38000}, 
country = {France}
}
\affiliation[2]{organization = {Univ. Grenoble Alpes, CNRS, Grenoble INP, LIG},
city = {Grenoble}, 
postcode = {38000}, 
country = {France}
}

\affiliation[3]{
organization = {Holiag},
city = {Neuville-sur-Saône},
postcode = {69250}, 
country = {France} 
}

\begin{abstract}
    The core of Transferable Utility (T.U.) games is a well-known solution concept from cooperative game theory yielding a cost allocation among $n$ agents (called players) forming a coalition that is stable (i.e. no subset of players has an interest to deviate). In this paper, inspired by a practical application in the context of a decision support system for collaborative transportation in a Short Food Supply Chain (SFSC), we mainly focus on Traveling Salesman Games (TSGs), where the objective is to allocate the cost of a Traveling Salesman Problem (TSP) with $n$ locations and 1 depot to $n$ players, each linked to exactly one of the locations. Given the computational complexity of computing an element of the core and the cost of a TSP, we study \textit{semicore allocations}: a relaxation of the core that only requires that the subsets of size $n - 1$ and of size $1$ do not wish to deviate from the coalition. In the literature, instances of TSGs with empty cores and semicores are found. Hence, this paper first surveys the methods to approximate stability whenever the core is empty, such as the cost of stability (computing the minimum amount of money to subsidize the coalition with to attain stability) and the $\varepsilon$-core (which is a set of allocations that allow subsets of players to exceed their actual cost, but at most of a value of $\varepsilon$). We prove that these two solution concepts are correlated. We also define the cost of semicore stability and the $\varepsilon$-semicore, inspired by the latter concepts. We provide exact formulas for the cost of semicore stability and for the smallest $\varepsilon$ such that the $\varepsilon$-semicore is not empty, for general subadditive games. Finally, we identify a method to bound the cost of stability in the literature, generalize it and translate it to the semicore. The bounds provided for the cost of (semicore) stability may allow the local government to estimate how much they require to subsidize some coalition of farmers with to allow stability.
\end{abstract}


\begin{keyword}
Cooperative Game Theory \sep Optimisation \sep Decision Support System \sep Collaborative Transportation \sep Short Food Supply Chain
\end{keyword}

\end{frontmatter}



\section{Cooperative game theory concepts for collaborative transportation in Short Food Supply Chains}\label{sec:intro}

Cooperative game theory provides a theoretical framework for cooperation structures. In particular, it defines Transferable Utility (T.U.) games \citep{shapley}, which represent a way to model many cooperation-related problems, such as coalition formation and value/cost allocation problems. The focus of this paper will be on the latter.

When multiple agents (called \textit{players}) wish to cooperate (or to form a \textit{coalition}) by jointly striving to solve some shared problem, one expects the efforts to be divided in a fair manner among the players. We only focus on the case where the efforts are financial, and thus want to study fair cost allocations. Cooperative game theory provides multiple ways to define what a "fair" cost allocation is. A classical solution concept conveying the idea of fairness is stability. The \textit{core} of a T.U. game is the set of "stable" cost allocations, where no subset of the players has an interest to deviate from their coalition, by paying a value that is smaller than or equal to the cost they would pay if they formed a coalition on their own. However, some specific types of T.U. games may have instances with empty cores. One example is the Traveling Salesman Game (TSG), where the objective is to allocate the cost of a Traveling Salesman Problem (TSP) with $n$ locations and 1 depot to $n$ players, each linked to exactly one of the locations.

The TSG will be the main focus of this paper. The study of this setting is motivated by a practical application to a decision support system for collaborative transportation in an association of farmers in a Short Food Supply Chain, in which the cost of a TSP must be allocated to the farmers. The association is composed of a group of farmers (denoted \sylvain{by} $\mathcal{N}$), sharing a vehicle (that we assume to have unlimited capacity for \sylvain{the sake of simplicity}), and a client (denoted \sylvain{by} $0$), where their production must be brought whenever they receive an order. The tour to pickup the production at the farms and to deliver it to the client is a \sylvain{solution to the} TSP where $\mathcal{N} \cup \{0\}$ is the set of locations to be covered. The transportation cost must be paid by the farmers, which means \sylvain{that} $\mathcal{N}$ is the set of players. 

Notice that when defining the core, we stated that all subsets of the players must have the property of not wanting to deviate. There are $2^n - 1$ non-empty subsets of a set of size $n$; therefore the sole representation of absolute stability requires an exponential number of constraints. Even though there are many efficient methods to compute elements of the core (or to decide whether it is empty) \citep{tsg, corealloctsg}, it may remain unpractical to compute a stable allocation when the number of players increases. The need for less restrictive fairness criteria is even more striking when the computation of the costs to allocate has an exponential time complexity, as is the case of the TSP.

An alternative is to limit the stability constraints to some specific subsets of the players. For instance, the semicore of T.U. games \citep{semicore, semicorenucleolus, semicoretsg} is a set of cost allocations where all players pay less than what they would if they were alone (stability of the coalitions of size $1$) and all players compensate their presence in the coalition, so that the remaining players do not wish to reject them (stability of coalitions of size $n - 1$). Through discussions with a logistics expert from the association studied in our research project, we learned that farmers have expressed their concern with paying more than what they would if they did not belong to the association, or their concerns with adding farmers to the association who are too expensive. This directly correlates with the concept of semicore. One natural question to ask is whether this solution concept, being less restrictive than the core, can be empty in the case of the TSG. It turns out that it can indeed be empty \citep{semicoretsg}, which in turn raises another question: \textit{How to compute allocations that approximate semicore stability whenever the semicore is empty?} 

Many alternatives to the core, whenever it is empty, can be found in the literature. It is possible to search for other solution concepts (such as the Shapley value -- \citealt{shapley}), to subsidize the coalition \citep{costofstability}, or to allow instability but minimizing it as much as possible \citep{originalepsiloncore}. In this paper, we take an interest in the latter two alternatives. The value of the minimum amount of money to subsidize a coalition with, in order to allow stability, is called the \textit{cost of stability}. An allocation allowing some coalitions to pay a value that exceeds their real cost (i.e. the cost that the coalition would pay if they formed a coalition on their own) by at most $\varepsilon$ is called the $\varepsilon$\textit{-core}. In this paper we establish a link between the cost of stability and the $\varepsilon$-core of a game, by proving they are equivalent to some extent. We translate these concepts to the semicore, defining the \textit{cost of semicore stability}, and the $\varepsilon$\textit{-semicore} of T.U. games. We provide an exact formula to compute the cost of semicore stability and the smallest $\varepsilon$ such that the $\varepsilon$-semicore is not empty. We identify in the literature a method to compute bounds for the cost of stability and the optimal $\varepsilon$-core, generalize it, and translate it to the semicore. We apply this method to Traveling Salesman Games. If some local government wanted to provide financial aid to associations of farmers in an SFSC, they may estimate the value that they can subsidize an association with, thanks to these bounds.

The paper is organized as follows. In Section~\ref{sec:def}, we introduce the basic concepts and tools from cooperative game theory that are required to understand the contributions of this paper. In Section~\ref{sec:semicore} we provide further justification to the study of semicore allocations, by giving further detail about the association of farmers of a SFSC inspiring this research. In Section~\ref{sec:approxfair} we introduce various concepts to approximate stability from the literature, and we provide the proof of equivalence between many of these concepts in the case of subadditive games. In Section~\ref{sec:approxsemicore}, we properly define the cost of semicore stability and the $\varepsilon$-semicore of subadditive games, and provide an exact formula for the optimal values of these concepts. In Section~\ref{sec:generalmethodbound}, we generalize a method from the literature to bound the aforementioned values, apply it to the TSG and translate it to the solution concept of the semicore. Finally, in Section~\ref{sec:polybounds} we provide simple, practical and polynomial bounds for the cost of semicore stability.

\subsection{Definitions from cooperative game theory}\label{sec:def}

In this section we introduce the basic notations and concepts from cooperative game theory.

\paragraph*{Transferable Utility Games}

\textit{Transferable Utility} (T.U.) Games, introduced by \sylvain{\cite{shapley}} as $n$-person games, are defined by a pair $(\mathcal{N}, v)$, with $\mathcal{N} = \{1, 2, ..., n\}$ a set of agents (that we call \textit{players}) with a common goal and $v:2^\mathcal{N} \rightarrow \mathbb{R}_+$, a value function such that for a subset $S \subseteq \mathcal{N}$ (called \textit{coalition}), $v(S)$ is the value that could be gained by having the players in $S$ to cooperate. In an analogous manner, we can refer to T.U. games where, instead of willing to maximize some gain, the players want to minimize a cost. We refer to these as \textit{cost function} games. We therefore use the notation $(\mathcal{N}, c)$ for cost function games, where $c(S) \in \mathbb{R}_+$ is the cost of having the players in $S \subseteq \mathcal{N}$ to cooperate. An \textit{outcome} (or \textit{allocation vector}) of a T.U. game is a vector $x = (x_i)_{i \in \mathcal{N}} \in \mathbb{R}_+$, where $x_i$ is the cost that player $i$ must pay and such that $\sum_{i \in \mathcal{N}} x_i = c(\mathcal{N})$ (\textit{i.e.,} a vector satisfying the \textit{efficiency} constraint). The set $\mathcal{N}$ is called the \textit{grand coalition}. This implies that the entirety of the cost of the grand coalition $c(\mathcal{N})$ is paid by all the members of $\mathcal{N}$. By abuse of notation, for any coalition $S \subseteq \mathcal{N}$, we may denote by $x(S)$ the cost $\sum_{i \in S}x_i$ paid by the members of $S$. The process of constructing such vectors with certain properties is a \textit{cost allocation problem} \citep{costallocp}. Generally, the properties that are expected from cost allocation vectors involve some sort of \textit{fairness} idea.

\paragraph*{Traveling Salesman Game} A T.U. game $(\mathcal{N}, c)$ is a \textit{Traveling Salesman Game} (TSG) if there exists a distance matrix $d = (d_{i,j})_{i,j \in \mathcal{N} \cup \{0\}}$ satisfying the triangle inequality, such that for every coalition $S \subseteq \mathcal{N}$, $c(S)$ is the cost of the minimum cost Hamiltonian circuit on the set $S \cup \{0\}$, with arc costs defined by matrix $d$ \citep{originaltsg}. In our context, node $0$ represents a client shared by a group of farmers, where the vehicle must bring all the production. We say that the distance matrix $d$ induces $(\mathcal{N}, c)$. Whenever the distance matrix inducing a TSG $(\mathcal{N}, c)$ is symmetric, we refer to the game as a Symmetric Traveling Salesman Game (STSG), and whenever it is not necessarily symmetric we refer to it simply as a Traveling Salesman Game (TSG). 

\paragraph*{Subadditive games} A game $(\mathcal{N}, c)$ is subadditive if all $S, S' \subseteq \mathcal{N}$ are such that $c(S) + c(S') \geq c(S \cup S')$. If the distance matrix inducing a TSG satisfies the triangle inequality then the corresponding game is subadditive \citep{tsg}. In this paper, we consider that the triangle inequality is always satisfied. Notice that subadditivity entails that it is less expensive to have the grand coalition to cooperate than to divide the players into multiple coalitions.

\paragraph*{Core of subadditive games} 
The core of a T.U. game $(\mathcal{N},c)$ is a set of outcomes $(x_i)_{i \in \mathcal{N}}$ such that no subset of $\mathcal{N}$ pays more than their actual cost. More formally:
{\small \begin{equation*}
    Core(\mathcal{N}, c) = \left\{{(x_i)_{i \in \mathcal{N}}\ \left|\ x(\mathcal{N}) = c(\mathcal{N}), \text{ and }  \forall S \subseteq \mathcal{N},\ c(S) \geq x(S) \text{, and } \forall i \in \mathcal{N},\ x_i \geq 0 \right.}\right\}
\end{equation*}
}
We say that an outcome of a game is \textit{stable} if it belongs to the core of the game. Indeed, if it were the case that a subset $S \subset \mathcal{N}$ was such that $x(S) > c(S)$, then $S$ could deviate from the grand coalition in order to manage their transportation by themselves, thus causing instability. We consider that a stable allocation is also \textit{fair}, although whether this is a philosophically correct term is a valid question that is out of the scope of this paper.

The core of a STSG can be empty whenever there are 6 or more players \citep{tsgcostallocgame} and can never be empty when there are 5 or less players \citep{tsg5p}. In the general (non symmetric) case, it can be empty when there are 4 or more players and cannot be empty when there are 3 or less \citep{originaltsg}. \cite{tsg} provide an example of a Euclidean distance TSG with an empty core along with an efficient method to compute an element of the core (or to decide whether it is empty), although it remains an NP-hard problem.

\subsection{Semicore allocations and their importance in the context of SFSC} \label{sec:semicore}


The concept of semicore of T.U. games was introduced by \sylvain{\cite{semicore}}. Given that, in order to model the core of a game, $2^n$ computations of the cost function are required, and that computing the cost function of one coalition can already be NP-hard, some form of relaxation of the core could be needed. We can, for instance, restrict ourselves to the stability constraint of some subset of all coalitions. The semicore is a set of allocation vectors satisfying the stability constraints on the sets of size $1$ and of size $n - 1$. 

More formally, we say that an outcome $(x_i)_{i \in \mathcal{N}}$ is \textit{individually rational} (or satisfies the individual rationality constraint) when $x_i \leq c(\{i\})$ for all $i \in \mathcal{N}$. The value $c(\{i\})$ is called the individual rationality of a player $i$. Farmers satisfying their individual rationality have an interest in participating in the coalition, since they pay a value that is less or equal to what they would pay by working alone. This corresponds to the stability of the coalitions of size~$1$. Notice that there always exists an allocation which satisfies the individual rationality for all players.

An outcome satisfies \textit{the marginal contribution constraint} if $x_i \geq c(\mathcal{N}) - c(\mathcal{N} \setminus \{i\})$ for all $i \in \mathcal{N}$. The value $c(\mathcal{N}) - c(\mathcal{N} \setminus \{i\})$ is the marginal contribution (or cost) of player~$i$. We can consider that the farmers $i \in \mathcal{N}$ satisfying their marginal contribution constraint are profitable for the coalition $\mathcal{N} \setminus \{i\}$, since they compensate their presence in the association. Notice that this is equivalent to the stability constraint of the coalition $\mathcal{N} \setminus \{i\}$, and thus that $\sum_{j \in \mathcal{N} \setminus \{i\}} x_j \leq c(\mathcal{N} \setminus \{i\})$.

The semicore of a game $(\mathcal{N}, c)$ is a set of outcomes $(x_i)_{i \in \mathcal{N}}$ that are individually rational and that satisfy the marginal contribution constraints:
\begin{multline*}
    Semicore(\mathcal{N}, c) = \left\{(x_i)_{i \in \mathcal{N}}\ \left|\ x(\mathcal{N}) = c(\mathcal{N}), \text{ and }  \forall i \in \mathcal{N}, x_i \leq c(\{i\})\right.\right.\\\left.\text{and } x(\mathcal{N} \setminus \{i\}) \leq c(\mathcal{N} \setminus \{i\})\right\}
\end{multline*}
Notice that $Core(\mathcal{N}, c)\subseteq Semicore(\mathcal{N}, c)$, since the semicore is constrained by a subset of the stability constraints defining the core.

Whenever the distance matrix is allowed to be asymmetric, there exist instances of TSGs with an empty semicore with 4 or more players, and with 6 or more players in the symmetric case \citep{semicoretsg}. Since the core of a STSG with 5 or less players cannot be empty \citep{tsg5p}, and since the core is included in the semicore, we have that the semicore of STSGs with 5 or less players is always non-empty. We reach through the same reasoning the conclusion that the semicore of TSGs with 3 or less players cannot be empty.

\paragraph*{The importance of the semicore}

\sylvain{\cite{semicorenucleolus}} state that the stability constraints defining the semicore are "the most essential". This was confirmed by our discussions with the experts in logistics of the association studied. Many of the farmers belonging to the association believed that they would be better off delivering their orders by themselves, which proves the need to satisfy their individual rationality. Some also held the belief that it was too expensive for the association to manage the pickup and delivery of orders of some individuals, reflecting that the need to have every member to satisfy the marginal contribution is part of their intuition. These intuitions justified the study of semicore allocations in our context.

\section{Approximately fair allocations}\label{sec:approxfair}

Whenever the core of a T.U. game is empty, we compare and discuss some alternatives from the literature which allow to find allocations that are "approximately" fair. Table~\ref{tab:approxfair} summarizes these various models.

\begin{table}[htbp]
    \centering
    {\scriptsize \begin{tabular}{|c|c|c|c|}
        \hline
        & Cost of stability & $\varepsilon$-core & $\alpha$-core\\
        \hline
        Objective function & min $\varepsilon$ & min $\varepsilon$ & min $\alpha$  \\
        Efficiency constraint & $x(\mathcal{N}) = c(\mathcal{N}) - \varepsilon$ &  $x(\mathcal{N}) = c(\mathcal{N})$ & $x(\mathcal{N}) \geq \frac{1}{\alpha}c(\mathcal{N})$\\
        Stability constraints & $x(S) \leq c(S)$ & $x(S) \leq c(S) + f(S)\varepsilon$ & $x(S)\leq c(S)$\\
        Domain & $\varepsilon \geq 0$ & $\varepsilon \geq 0$ & $\alpha \geq 1$\\
        \hline
        Notation of & $CoS(\mathcal{N}, c)$ & $wOeC(\mathcal{N}, c)$: $f(S) = |S|$,
        & $OaC(\mathcal{N}, c)$\\
        the optimal solution & & $sOeC(\mathcal{N}, c)$: $f(S) = 1$ &\\
        \hline
        References & \cite{costofstability} & \cite{originalepsiloncore} & \cite{alphacoretsg} \\
        \hline
    \end{tabular}}
    \caption{Summary of optimal approximately fair allocations in the literature. Stability constraints hold for all $S \subset \mathcal{N}$. $OeC$ stands for "Optimal epsilon Core" and the $w$ and the $s$ preceeding $OeC$ stand for "weak" and "strong" respectively. $OaC$ stands for "Optimal alpha Core". Remember that for all concepts, $x_i \geq 0$ for all $i \in \mathcal{N}$.}\label{tab:approxfair}
    
\end{table}

\paragraph*{Cost of stability \citep{costofstability}} \label{par:cos}

A commonly used alternative to approximate the core is by injecting money in order to attain stability in the grand coalition. In the case of the association of farmers that we have been studying, this alternative could be used by the local governments if they want to subsidize the association. From the standpoint of the government, the idea would be to provide the smallest amount of money that they could possibly provide in order to allow stability. The cost of stability is the minimum reduction to the cost of the grand coalition in order to obtain a stable allocation \citep{costofstability}. Given a T.U. game $(\mathcal{N}, c)$, the cost of stability of $(\mathcal{N}, c)$, noted $CoS(\mathcal{N}, c)$, corresponds to the value of the optimal solution of the following linear program:
\begin{equation*}
    \begin{array}{rll}
        \text{minimize} & \varepsilon\\
        \text{subject to} & \sum_{i \in \mathcal{N}} x_i = c(\mathcal{N}) - \varepsilon\\
        & \sum_{i \in S} x_i \leq c(S) & \forall S \subseteq \mathcal{N}\\
        & x_i \geq 0 & \forall i \in \mathcal{N}\\
        & \varepsilon \geq 0
    \end{array}
\end{equation*}
Note that if $\varepsilon = 0$ is a feasible solution, then $Core(\mathcal{N}, c) \neq \emptyset$.

\paragraph*{$\varepsilon$-core \citep{originalepsiloncore}}

Another possibility to find an approximately fair allocation whenever the core of a game is empty, is by allowing some instability, although minimizing it as much as possible. \sylvain{\cite{originalepsiloncore}} introduce the concept of $\varepsilon$-core, which is further expanded \sylvain{by \cite{epsiloncore}}, by listing multiple ways to extend the concept of $\varepsilon$-core that are found in the literature. Generally speaking, \sylvain{the} $\varepsilon$-core of a game $(\mathcal{N}, c)$ can be defined as:
\begin{multline*}
    \varepsilon \text{-} core(\mathcal{N}, c) = \left\{ (x_i)_{i \in \mathcal{N}} \left| \sum_{i \in \mathcal{N}} x_i = c(\mathcal{N}) \text{ and } \forall S \subset \mathcal{N},\ \sum_{i \in S} x_i \leq c(S) + f(S)\varepsilon\right.\right.\\\text{and } \forall i \in \mathcal{N},\ x_i \geq 0 \Biggl\}
\end{multline*}
\sylvain{When $f(S) = |S|$ (resp. $f(S) = 1$) for all $S \subseteq \mathcal{N}$, this set is called the "weak-$\varepsilon$-core" (resp. "strong-$\varepsilon$-core") by \cite{originalepsiloncore}}. We use the notation $sOeC(\mathcal{N}, c)$ and $wOeC(\mathcal{N}, c)$ to refer to the smallest value of $\varepsilon$ such that the strong-$\varepsilon$-core and the weak-$\varepsilon$-core are not empty respectively. \sylvain{\cite{epsiloncore}} also study the case where $f(S) = c(S)$. Proposition~\ref{prop:epsiloncos} establishes a link between the cost of stability and the weak-$\varepsilon$-core of a game.

\paragraph*{$\alpha$-core \citep{alphacoretsg}}

\sylvain{\cite{alphacoretsg}} define the $\alpha$-core, for $\alpha \geq 1$ and for a game $(\mathcal{N}, c)$:
\begin{multline*}
    \alpha \text{-} Core(\mathcal{N}, c) = \left\{(x_i)_{i \in \mathcal{N}}\ \left|\ x(\mathcal{N}) \geq \frac{1}{\alpha}c(\mathcal{N}) \text{ and } \forall S \subseteq \mathcal{N},\ x(S) \leq c(S)\right.\right.\\ \text{and } \forall i \in \mathcal{N},\ x_i \geq 0 \Biggl\}
\end{multline*}
where $\frac{1}{\alpha}$ is the fraction of the cost of the grand coalition that we want to make the players pay. If $\alpha \text{-} Core(\mathcal{N}, c) \neq \emptyset$, the game $(\mathcal{N}, c)$ is said to be $\alpha$-budget balanced.
Once again, if $Core(\mathcal{N}, c) = \emptyset$, we want to maximize the cost paid by the players to allow stability, and thus we want to find \sylvain{the smallest $\alpha \geq 1$ such that $\alpha\text{-}Core(\mathcal{N},c) \neq \emptyset$. Such value of $\alpha$ is denoted by $OaC(\mathcal{N}, c)$}. Notice that $OaC(\mathcal{N}, c) = 1$ if and only if $Core(\mathcal{N}, c) \neq \emptyset$. Moreover:

\begin{align*}
    CoS(\mathcal{N}, c) = c(\mathcal{N})\left(1 - \frac{1}{OaC(\mathcal{N}, c)}\right)
\end{align*}

\sylvain{\cite{alphacoretsg}} also prove that, in the case where $(\mathcal{N}, c)$ is a TSG, $OaC(\mathcal{N}, c) \leq \frac{3}{2}$, which implies that $CoS(\mathcal{N}, c) \leq \frac{1}{3}c(\mathcal{N})$. In Section~\ref{sec:generalmethodbound} we \sylvain{give} a general method to find this type of bound.

\begin{proposition}\label{prop:epsiloncos}
    Let $(\mathcal{N}, c)$ be a cost function game such that $Core(\mathcal{N}, c) = \emptyset$. We have: $CoS(\mathcal{N}, c) = n \cdot wOeC(\mathcal{N}, c)$.
    \begin{proof}
        Let:
        \begin{align*}
            & A^\varepsilon_{(\mathcal{N}, c)} = \left\{(x_i)_{i \in \mathcal{N}}\ \left|\ x(\mathcal{N}) = c(\mathcal{N}) - \varepsilon \text{ and } \forall S \subset \mathcal{N},\ x(S) \leq c(S) \text{ and } \forall i \in \mathcal{N}, x_i \geq 0 \right. \right\}\\
            & B^\varepsilon_{(\mathcal{N}, c)} = \text{weak-}\varepsilon\text{-}core(\mathcal{N}, c)
        \end{align*}
        and let $\varepsilon_a = CoS(\mathcal{N}, c) = min \{ \varepsilon\ |\ A^\varepsilon_{(\mathcal{N}, c)} \neq \emptyset\}$ and $\varepsilon_b = wOeC(\mathcal{N}, c) = min \{\varepsilon\ |\ B^\varepsilon_{(\mathcal{N}, c)} \neq \emptyset\}$.
        
        We first show that $n \varepsilon_b \leq \varepsilon_a$.
        Let $(x_i)_{i \in \mathcal{N}} \in A^{\varepsilon_a}_{(\mathcal{N}, c)}$, and $x' = (x'_i)_{i \in \mathcal{N}}$ such that $x'_i = x_i + \frac{\varepsilon_a}{n}$ for each $i \in \mathcal{N}$. We have:
        \begin{align*}
            \sum_{i \in \mathcal{N}} x'_i = \sum_{i \in \mathcal{N}} x_i + n\frac{\varepsilon_a}{n} = c(\mathcal{N}) - \varepsilon_a + \varepsilon_a = c(\mathcal{N}) 
        \end{align*}
        We also have that, for all $S \subset \mathcal{N}$:
        \begin{align*}
            \sum_{i \in S} x'_i = \sum_{i \in S} x_i + |S|\frac{\varepsilon_a}{n} \leq c(S) + |S|\frac{\varepsilon_a}{n}
        \end{align*}
        Therefore $B^{\frac{\varepsilon_a}{n}}_{(\mathcal{N}, c)} \neq \emptyset$, implying that $\varepsilon_b \leq \frac{\varepsilon_a}{n}$. Hence $n \varepsilon_b \leq \varepsilon_a$.
        
         We now show that $n \varepsilon_b \geq \varepsilon_a$. Let $x = (x_i)_{i \in \mathcal{N}} \in B^{\varepsilon_b}_{(\mathcal{N}, c)}$ and $x' = (x'_i)_{i \in \mathcal{N}}$ be such that $x'_i = x_i - \varepsilon_b$ for each $i \in \mathcal{N}$. We have:
        \begin{align*}
            \sum_{i \in \mathcal{N}} x'_i = \sum_{i \in \mathcal{N}} x_i - n \varepsilon_b = c(\mathcal{N}) - n \varepsilon_b
        \end{align*}
        Besides, for all $S \subset \mathcal{N}$:
        \begin{align*}
            & \sum_{i \in S} x'_i = \sum_{i \in S} x_i - |S|\varepsilon_b \leq c(S) + |S|\varepsilon_b - |S|\varepsilon_b = c(S)\\
            \Rightarrow & \sum_{i \in \mathcal{N}} x'_i \leq c(S)
        \end{align*}
        We therefore have $A^{n\varepsilon_b}_{(\mathcal{N}, c)} \neq \emptyset$, which implies that $\varepsilon_a \leq n \varepsilon_b$. We conclude $\varepsilon_a = n \varepsilon_b$.
    \end{proof}
\end{proposition}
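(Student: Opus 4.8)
The plan is to exploit the single structural difference between the two programs: the cost-of-stability program keeps every coalition constraint tight ($x(S) \le c(S)$) but relaxes efficiency to $x(\mathcal{N}) = c(\mathcal{N}) - \varepsilon$, whereas the weak-$\varepsilon$-core keeps efficiency exact but relaxes each coalition constraint to $x(S) \le c(S) + |S|\varepsilon$. The crucial observation is that a \emph{uniform translation} $x_i \mapsto x_i + \delta$ changes $x(S)$ by exactly $|S|\delta$ for every coalition $S$, and in particular changes $x(\mathcal{N})$ by $n\delta$. This is precisely the $|S|$-weighting appearing in the weak-$\varepsilon$-core, so such a shift should convert one feasible set into the other, with the scaling factor $n$ arising from the grand coalition. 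I would first introduce the parametrized feasible sets $A^\varepsilon$ (subsidy-$\varepsilon$ allocations for the cost of stability) and $B^\varepsilon$ (the weak-$\varepsilon$-core), so that $CoS(\mathcal{N},c) = \min\{\varepsilon : A^\varepsilon \neq \emptyset\}$ and $wOeC(\mathcal{N},c) = \min\{\varepsilon : B^\varepsilon \neq \emptyset\}$, and then prove the two inequalities separately.

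For $n\cdot wOeC \le CoS$, I would take any optimal $x \in A^{CoS}$ and shift it up by $\delta = CoS/n$. The new efficiency reads $c(\mathcal{N}) - CoS + n\delta = c(\mathcal{N})$, and for every proper coalition the sum becomes $x(S) + |S|\delta \le c(S) + |S|(CoS/n)$, which exhibits a point of the weak-$(CoS/n)$-core; hence $wOeC \le CoS/n$. For the reverse inequality $CoS \le n\cdot wOeC$, I would take an optimal $x \in B^{wOeC}$ and shift it \emph{down} by $\delta = wOeC$. Efficiency becomes $c(\mathcal{N}) - n\cdot wOeC$, and each proper coalition sum becomes $x(S) - |S|\,wOeC \le c(S) + |S|\,wOeC - |S|\,wOeC = c(S)$, which places the shifted vector in $A^{\,n\cdot wOeC}$; hence $CoS \le n\cdot wOeC$. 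Combining the two gives the claimed equality.

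The step that needs genuine care — and the one I expect to be the main obstacle — is the preservation of the non-negativity constraints $x_i \ge 0$ under these translations. In the upward direction this is immediate, since $\delta = CoS/n \ge 0$ only increases each coordinate. In the downward direction, however, subtracting $\delta = wOeC$ requires $x_i \ge wOeC$ for every $i$, which is not automatic from the weak-$\varepsilon$-core constraints alone (the singleton and complement constraints only bound coordinates from above, or give weaker lower bounds). I would therefore need to argue that the representative weak-$wOeC$-core allocation can be chosen with every coordinate at least $wOeC$, or otherwise verify that the downward-shifted vector still satisfies $x_i \ge 0$; establishing this lower bound on the coordinates at the optimum is the delicate part, and the rest of the argument is routine bookkeeping once it is in place.
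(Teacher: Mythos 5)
Your construction is exactly the paper's: the same parametrized sets $A^\varepsilon_{(\mathcal{N},c)}$ and $B^\varepsilon_{(\mathcal{N},c)}$, and the same two uniform shifts (up by $\varepsilon_a/n$, down by $\varepsilon_b$). But the obstacle you single out at the end --- preservation of $x_i \geq 0$ under the downward shift --- is not a loose end you merely failed to dispatch: it is a genuine gap, and the paper's own proof contains it as well. The paper sets $x'_i = x_i - \varepsilon_b$ and asserts $x' \in A^{n\varepsilon_b}_{(\mathcal{N},c)}$ without ever verifying $x'_i \geq 0$, even though non-negativity is explicitly part of the definition of $A^\varepsilon_{(\mathcal{N},c)}$ (and of every concept in Table~1). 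So you have correctly located the weak point; neither your proposal nor the paper closes it.

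Moreover, the gap cannot be closed under the proposition's stated hypotheses (an arbitrary cost function game with empty core): the equality is false at that level of generality. Take $\mathcal{N} = \{1,2,3\}$ with $c(\{1\}) = c(\{2\}) = 10$, $c(\{3\}) = c(\{1,3\}) = c(\{2,3\}) = 0$, $c(\{1,2\}) = 20$, $c(\mathcal{N}) = 30$. The constraints $x_3 \leq 0$, $x_1 + x_3 \leq 0$, $x_2 + x_3 \leq 0$ together with $x \geq 0$ force $x = (0,0,0)$, so the core is empty and $CoS(\mathcal{N},c) = 30$. For the weak-$\varepsilon$-core, adding the constraints $x_1 + x_3 \leq 2\varepsilon$ and $x_2 + x_3 \leq 2\varepsilon$ to the efficiency constraint $x_1+x_2+x_3 = 30$ gives $30 + x_3 \leq 4\varepsilon$, hence $\varepsilon \geq 15/2$, and $x = (15,15,0)$ is feasible for $\varepsilon = 15/2$; thus $wOeC(\mathcal{N},c) = 15/2$ and $n \cdot wOeC(\mathcal{N},c) = 45/2 \neq 30$. (Every weak-$\varepsilon_b$-core point here has $x_3 = 0 < \varepsilon_b$, which is precisely your failure mode.) Two ways to rescue the statement: (i) drop the constraints $x_i \geq 0$ from both definitions, as much of the literature does --- then your shift argument is complete as written; or (ii) assume monotonicity, i.e.\ $c(S') \leq c(S)$ for $S' \subseteq S$ (true for TSGs under the triangle inequality), and replace the downward shift by the truncated shift $x'_i = \max(x_i - \varepsilon_b, 0)$: for any proper $S$, writing $S' = \{i \in S : x_i \geq \varepsilon_b\}$, one gets $x'(S) = x(S') - |S'|\varepsilon_b \leq c(S') \leq c(S)$, while $c(\mathcal{N}) - n\varepsilon_b \leq x'(\mathcal{N}) \leq c(\mathcal{N})$, so $x' \in A^{\varepsilon'}_{(\mathcal{N},c)}$ for some $0 \leq \varepsilon' \leq n\varepsilon_b$. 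Note finally that the counterexample above is not subadditive; whether the equality holds for all subadditive games (the class the paper claims as its setting) is settled by neither your argument nor the paper's.
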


To summarize it all, Table~\ref{tab:approxfair} shows side by side the different models for approximately fair allocations. Notice that they always admit a feasible solution, hence these notions are well defined.

The following equation summarizes the links between them:
\begin{align*}
    CoS(\mathcal{N}, c) = c(\mathcal{N})\left(1 - \frac{1}{OaC(\mathcal{N}, c)}\right) = n \cdot wOeC(\mathcal{N}, c)
\end{align*}

\section{Computation of Approximate Semicore Stability}\label{sec:approxsemicore}

We now turn to the concept of semicore. As stated in Section~\ref{sec:semicore}, the semicore of a TSG can be empty. In these situations, we could either find another solution concept, or study relaxations of the semicore. In this paper we are interested in the latter. Similarly to how the cost of stability is defined in Section~\ref{sec:approxfair}, we can define the \textit{cost of semicore stability} as the minimum reduction to the cost of the grand coalition in order to obtain a game with a non-empty semicore. We denote the cost of semicore stability \sylvain{by} $CoSS(\mathcal{N}, c)$. Table~\ref{tab:approxfair} can also be used to summarize all the remaining concepts of approximate semicore stability; it suffices to apply the stability constraints to all subsets $S \subset \mathcal{N}$ such that $|S| = 1$ (individual rationality) or $|S| = n - 1$ (marginal contribution) instead of to all subsets of $\mathcal{N}$. The equivalent of the $\varepsilon$-core, in the case of the semicore, is referred to as $\varepsilon$-semicore, and the equivalent of the $\alpha$-core is the $\alpha$-semicore. We can also make the distinction between the strong and the weak-$\varepsilon$-semicore depending on the definition of function $f$ (\textit{i.e.} $f(S) = 1$ and $f(S) = |S|$ respectively). We use the notations $wOeS(\mathcal{N}, c)$ and $sOeS(\mathcal{N}, c)$ to represent the value of the weak and strong "Optimal epsilon Semicore" of the game $(\mathcal{N}, c)$. In the remaining part of this paper we mainly focus on the strong version of the $\varepsilon$-semicore, and thus $f(S) = 1$ for all $S \subset \mathcal{N}$.


In this section, for subadditive games, we give exact formulas for the cost of semicore stability $CoSS(\mathcal{N}, c)$ in Theorem~\ref{theorem:coss} and for the optimal strong-$\varepsilon$-semicore $sOeS(\mathcal{N}, c)$ in Theorem~\ref{theorem:epsilonsemicore}. \sylvain{Let us start with a lemma that will be useful to prove Theorems~\ref{theorem:coss} and \ref{theorem:epsilonsemicore}.}

\begin{lemma}\label{lemma:lemmaepsilon}
    Let $(\mathcal{N}, c)$ be a subadditive T.U. game. Let $c(\mathcal{N}) \geq \varepsilon \geq 0$ and let $(\mathcal{N}, c^\varepsilon)$ be a T.U. game such that $c^\varepsilon(\mathcal{N}) = c(\mathcal{N}) - \varepsilon$ and $c^\varepsilon(S) = c(S)$ for $S \subset \mathcal{N}$. The game $(\mathcal{N}, c^\varepsilon)$ is subadditive.
\end{lemma}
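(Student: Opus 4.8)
The plan is to verify the defining subadditivity inequality
$c^\varepsilon(S) + c^\varepsilon(S') \geq c^\varepsilon(S \cup S')$ directly for every pair $S, S' \subseteq \mathcal{N}$, exploiting the fact that $c^\varepsilon$ coincides with $c$ on every proper subset of $\mathcal{N}$ and differs from it only on the grand coalition. The guiding observation is that the only term in the inequality that the perturbation can affect is the one carrying the largest set, so the only constraints that are not immediate consequences of the subadditivity of $c$ are those for which $S \cup S' = \mathcal{N}$. The whole argument is therefore an elementary case split on the size of $S \cup S'$.

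First I would dispose of the case $S \cup S' \subsetneq \mathcal{N}$. Here $S$, $S'$ and $S \cup S'$ are all proper subsets of $\mathcal{N}$, so $c^\varepsilon$ agrees with $c$ on all three, and the required inequality is exactly the subadditivity of the original game $(\mathcal{N}, c)$, which holds by hypothesis.

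Next I would treat the case $S \cup S' = \mathcal{N}$, where $c^\varepsilon(S \cup S') = c(\mathcal{N}) - \varepsilon$, splitting into two subcases. If both $S$ and $S'$ are proper subsets, then $c^\varepsilon(S) + c^\varepsilon(S') = c(S) + c(S') \geq c(S \cup S') = c(\mathcal{N}) \geq c(\mathcal{N}) - \varepsilon$, using subadditivity of $c$ for the middle inequality and $\varepsilon \geq 0$ for the last. If instead one of them, say $S$, equals $\mathcal{N}$, then $S \cup S' = \mathcal{N}$ automatically and the inequality reduces to $c^\varepsilon(S') \geq 0$; this holds because $c^\varepsilon(S') = c(S') \geq 0$ when $S'$ is a proper subset (costs are non-negative, as $c$ maps into $\mathbb{R}_+$), and because $c^\varepsilon(\mathcal{N}) = c(\mathcal{N}) - \varepsilon \geq 0$ when $S' = \mathcal{N}$.

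I do not expect any genuine obstacle: the statement is a routine case analysis and no estimate is delicate. The single point requiring care is the subcase in which one argument is the grand coalition, since there the subadditivity of $c$ yields nothing directly; one must instead invoke the non-negativity of the cost function together with the hypothesis $c(\mathcal{N}) \geq \varepsilon$, which is precisely what guarantees $c^\varepsilon(\mathcal{N}) \geq 0$ and hence keeps $(\mathcal{N}, c^\varepsilon)$ a well-defined T.U. game with non-negative values.
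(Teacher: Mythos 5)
Your proof is correct and follows essentially the same route as the paper's: a case split on whether $S \cup S'$ is a proper subset of $\mathcal{N}$ or equals $\mathcal{N}$, with the latter subdivided according to whether one of the two sets is the grand coalition itself. If anything, you are slightly more careful than the paper in the final subcase --- the paper loosely asserts $c^\varepsilon(S') = c(S')$ for \emph{every} $S' \subseteq \mathcal{N}$ (false for $S' = \mathcal{N}$), whereas you correctly invoke the hypothesis $c(\mathcal{N}) \geq \varepsilon$ to get $c^\varepsilon(\mathcal{N}) \geq 0$.
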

\begin{proof}
    The case $S, S' \subset \mathcal{N}$, with $S \cup S' \subset \mathcal{N}$, derives directly from the subadditivity of $(\mathcal{N}, c)$, since the cost function does not change for those subsets. If $S, S' \subset \mathcal{N}$ and $S \cup S' = \mathcal{N}$, we have $c^\varepsilon(S) + c^\varepsilon(S') = c(S) + c(S') \geq c(\mathcal{N}) \geq c(\mathcal{N}) - \varepsilon = c^\varepsilon(\mathcal{N})$. Moreover, for any $S' \subseteq \mathcal{N}$, we have that $c^\varepsilon(\mathcal{N}) + c^\varepsilon(S') \geq c^\varepsilon(\mathcal{N})$, since $c^\varepsilon(S') = c(S')$ is positive for every $S' \subseteq \mathcal{N}$.
\end{proof}

\begin{theorem}\label{theorem:coss}
    Let $(\mathcal{N}, c)$ be a subadditive game such that $Semicore(\mathcal{N}, c) = \emptyset$. We have: $$CoSS(\mathcal{N}, c) = c(\mathcal{N}) - \frac{1}{n - 1} \sum_{j \in \mathcal{N}} c(\mathcal{N} \setminus \{j\})$$
\end{theorem}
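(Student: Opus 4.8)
The plan is to read $CoSS(\mathcal{N},c)$ off the linear program obtained from the cost-of-stability program by retaining only the semicore constraints: minimise $\varepsilon$ subject to $x(\mathcal{N}) = c(\mathcal{N}) - \varepsilon$, the individual rationality constraints $x_i \le c(\{i\})$, the marginal contribution constraints $x(\mathcal{N}\setminus\{j\}) \le c(\mathcal{N}\setminus\{j\})$, together with $x_i \ge 0$ and $\varepsilon \ge 0$. Equivalently, in the notation of Lemma~\ref{lemma:lemmaepsilon}, $CoSS(\mathcal{N},c)$ is the smallest $\varepsilon \in [0, c(\mathcal{N})]$ such that $Semicore(\mathcal{N}, c^\varepsilon) \neq \emptyset$; Lemma~\ref{lemma:lemmaepsilon} ensures each $c^\varepsilon$ is still subadditive, so the same feasibility analysis applies uniformly over the whole family. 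For brevity write $m_j = c(\mathcal{N}\setminus\{j\})$ and $t = c(\mathcal{N}) - \varepsilon$.

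For the lower bound I would argue directly from the constraints. By efficiency, each marginal contribution constraint $x(\mathcal{N}\setminus\{j\}) \le m_j$ rewrites as $x_j \ge t - m_j$. Summing these $n$ inequalities and using $\sum_{j} x_j = t$ yields $t \ge n t - \sum_j m_j$, i.e. $t \le \frac{1}{n-1}\sum_j m_j$, which is exactly $\varepsilon \ge c(\mathcal{N}) - \frac{1}{n-1}\sum_{j} c(\mathcal{N}\setminus\{j\})$. This half is clean and uses only efficiency and the marginal contribution constraints.

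For the matching upper bound I would exhibit a feasible allocation at $\varepsilon^\star = c(\mathcal{N}) - \frac{1}{n-1}\sum_j m_j$, the natural candidate being the one making every marginal contribution constraint tight, namely $x_j = (c(\mathcal{N})-\varepsilon^\star) - m_j = \frac{1}{n-1}\sum_k m_k - m_j$. A short computation gives $\sum_j x_j = \frac{n}{n-1}\sum_k m_k - \sum_k m_k = \frac{1}{n-1}\sum_k m_k = c(\mathcal{N}) - \varepsilon^\star$, so efficiency holds, and by construction $x(\mathcal{N}\setminus\{j\}) = m_j$, so the marginal contribution constraints are satisfied with equality. Individual rationality should then follow from subadditivity: since $c(\{j\}) + c(\mathcal{N}\setminus\{j\}) \ge c(\mathcal{N})$, we get $x_j \le c(\mathcal{N}) - m_j \le c(\{j\})$, provided $\varepsilon^\star \ge 0$, which the empty-semicore hypothesis should force.

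The step I expect to be the main obstacle is the non-negativity requirement $x_j \ge 0$, i.e. $m_j \le \frac{1}{n-1}\sum_k m_k$ for every $j$, equivalently $\max_j c(\mathcal{N}\setminus\{j\}) \le \frac{1}{n-1}\sum_k c(\mathcal{N}\setminus\{k\})$. This does \emph{not} follow from subadditivity alone: subadditivity only supplies the pairwise bounds $m_i + m_j \ge c(\mathcal{N})$, which lower-bound the $m_k$ and are too weak to prevent a single $m_j$ from exceeding the average. I would therefore concentrate effort here, checking whether the hypothesis $Semicore(\mathcal{N},c)=\emptyset$ can be leveraged to deliver this inequality, and, if it cannot, either adding it as an explicit hypothesis (it holds automatically for $n \le 3$) or replacing the tight-at-the-margin construction by one projected onto the non-negative orthant. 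Should the inequality fail for some $j$, the summed-constraint value is merely a lower bound and the true $CoSS$ is governed instead by the piecewise-linear quantity $\min\{\varepsilon \ge 0 : \sum_j \max(0,(c(\mathcal{N})-\varepsilon)-m_j) \le c(\mathcal{N})-\varepsilon\}$, so pinning down exactly when the clean formula is attained is the crux.
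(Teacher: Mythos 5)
Your argument is, modulo the obstacle you flag, the paper's own proof with its black box opened. The paper proves both directions by invoking a characterization cited from the literature --- for subadditive games, $Semicore(\mathcal{N},c) = \emptyset$ if and only if $\sum_{j \in \mathcal{N}}\bigl(c(\mathcal{N}) - c(\mathcal{N}\setminus\{j\})\bigr) > c(\mathcal{N})$ --- applied to the modified games $(\mathcal{N}, c^\varepsilon)$, which Lemma~\ref{lemma:lemmaepsilon} shows are subadditive. Your summed-constraint lower bound is exactly the easy direction of that equivalence, and your tight-at-the-margin allocation $x_j = \frac{1}{n-1}\sum_k m_k - m_j$ is exactly the construction hiding inside the hard direction, which the paper never writes out because it cites the equivalence wholesale. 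So the mathematical content coincides; yours is the self-contained version.

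The obstacle you single out is genuine, and you have put your finger on a real soft spot of the paper --- but the resolution is a convention, not a missing lemma. The cited characterization, and with it Theorem~\ref{theorem:coss}, hold only if semicore allocations are allowed to be negative (i.e.\ a player may be paid by the others). Under that sign-unrestricted convention your proof is complete as written: the constraint $x_j \ge 0$ disappears, and $\varepsilon^\star \ge 0$ does follow from emptiness, because the box-plus-sum system $c(\mathcal{N}) - m_j \le x_j \le c(\{j\})$, $\sum_j x_j = c(\mathcal{N})$ is feasible as soon as $\sum_j \bigl(c(\mathcal{N}) - m_j\bigr) \le c(\mathcal{N})$ (pointwise compatibility $c(\mathcal{N}) - m_j \le c(\{j\})$ being subadditivity). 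Under the paper's stated convention, however (outcomes lie in $\mathbb{R}_+$, and Table~\ref{tab:approxfair} imposes $x_i \ge 0$ ``for all concepts''), the theorem is false and your piecewise-linear expression is the correct value of $CoSS$. Take $n = 4$, $c(S) = 5$ for every nonempty $S \subsetneq \mathcal{N}$ except $c(\{1,2,3\}) = 10$, and $c(\mathcal{N}) = 10$: this game is subadditive (even monotone), its semicore is empty since the marginal contributions $5,5,5,0$ sum to $15 > 10$, and the formula gives $10 - \frac{25}{3} = \frac{5}{3}$; but with $x_i \ge 0$, feasibility at $t = c(\mathcal{N}) - \varepsilon$ requires $3\max(0, t-5) \le t$, i.e.\ $t \le \frac{15}{2}$, so the true minimum is $\frac{5}{2}$, and the value $\frac{5}{3}$ is attained only by paying player $4$ the amount $\frac{5}{3}$. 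So your instinct is right: emptiness of the semicore cannot deliver $\max_j m_j \le \frac{1}{n-1}\sum_k m_k$. The correct fix is not a projection onto the non-negative orthant but an explicit statement that allocations are unrestricted in sign, after which your construction closes the proof.
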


\begin{proof}
    Let $(\mathcal{N}, c)$ be a subadditive game with an empty semicore. Let $\varepsilon \geq 0$ and let $(\mathcal{N}, c^\varepsilon)$ be a T.U. game such that $c^\varepsilon(\mathcal{N}) = c(\mathcal{N}) - \varepsilon$ and $c^\varepsilon(S) = c(S)$ for $S \subset \mathcal{N}$. We have that $CoSS(\mathcal{N}, c) = \min \{\varepsilon~\geq~0\ |\ Semicore(\mathcal{N}, c^\varepsilon) \neq \emptyset\}$. Let $\varepsilon^* = CoSS(\mathcal{N}, c)$. Lemma~\ref{lemma:lemmaepsilon} proves that $(\mathcal{N}, c^{\varepsilon^*})$ is subadditive. \sylvain{\cite{semicoretsg} prove} that for any subadditive game $(\mathcal{N}, c)$, we have $Semicore(\mathcal{N}, c) = \emptyset \iff \sum_{j \in \mathcal{N}} \left( c(\mathcal{N}) - c(\mathcal{N} \setminus \{j\}) \right) > c(\mathcal{N})$. 
    
    Since $(\mathcal{N}, c^{\varepsilon^*})$ is subadditive and has a non empty semicore, it entails that $\sum_{j \in \mathcal{N}} (c^{\varepsilon^*}(\mathcal{N}) - c^{\varepsilon^*}(\mathcal{N} \setminus \{j\})) \leq c^{\varepsilon^*}(\mathcal{N})$. Hence, we have:
    \begin{align*}
        & \quad \sum_{j \in \mathcal{N}} \left(c(\mathcal{N}) - \varepsilon^* - c(\mathcal{N} \setminus \{j\})\right) \leq c(\mathcal{N}) - \varepsilon^*\\
        \Rightarrow & \quad c(\mathcal{N}) - \frac{1}{n - 1} \sum_{j \in \mathcal{N}} c(\mathcal{N} \setminus \{j\}) \leq \varepsilon^*
    \end{align*}
    Let $\varepsilon = c(\mathcal{N}) - \frac{1}{n - 1}\sum_{j \in \mathcal{N}} c(\mathcal{N} \setminus \{j\})$. We proceed to prove that $Semicore(\mathcal{N}, c^\varepsilon) \neq \emptyset$ and thus $\varepsilon^* = \varepsilon$. We have:\\
    $\sum_{j \in \mathcal{N}} \left((c(\mathcal{N}) - \varepsilon) - c(\mathcal{N} \setminus \{j\}) \right)$
    {\allowdisplaybreaks
    \begin{align*}
        & & = & \quad n c(\mathcal{N}) - \left(\sum_{j \in \mathcal{N}} c(\mathcal{N} \setminus \{j\}\right) - n\varepsilon\\
        & \quad & = & \quad nc(\mathcal{N}) - \left(\sum_{j \in \mathcal{N}} c(\mathcal{N} \setminus \{j\}\right) - n\left(c(\mathcal{N}) - \frac{1}{n - 1}\sum_{j \in \mathcal{N}} c(\mathcal{N} \setminus \{j\})\right)\\
        & \quad & = & \quad \frac{1}{n - 1}\sum_{j \in \mathcal{N}} c(\mathcal{N} \setminus \{j\})\\
        & \quad & = & \quad c(\mathcal{N}) - \varepsilon\\
        & \quad & = & \quad c^\varepsilon(\mathcal{N})
    \end{align*}
    }
    Hence, $\sum_{j \in \mathcal{N}} c^\varepsilon(\mathcal{N}) - c^\varepsilon(\mathcal{N} \setminus \{j\}) = c^\varepsilon(\mathcal{N})$, which in turn implies that $Semicore(\mathcal{N}, c^\varepsilon) \neq \emptyset$, and thus $\varepsilon = \varepsilon^*$.
\end{proof}

\begin{theorem}\label{theorem:epsilonsemicore}
    Let $(\mathcal{N}, c)$ be a subadditive game such that $Semicore(\mathcal{N}, c) = \emptyset$. We have: $$sOeS(\mathcal{N}, c) = \frac{(n - 1)}{n}c(\mathcal{N}) - \frac{1}{n}\sum_{i \in \mathcal{N}} c(\mathcal{N} \setminus \{i\})$$
\end{theorem}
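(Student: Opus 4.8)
The plan is to mirror the strategy used for Theorem~\ref{theorem:coss}: reduce the computation of $sOeS(\mathcal{N},c)$ to a statement about the ordinary semicore of a suitably modified game, and then invoke the characterization of \cite{semicoretsg}. Concretely, for a fixed $\varepsilon \ge 0$ I would define the game $(\mathcal{N}, \hat c^\varepsilon)$ by $\hat c^\varepsilon(\mathcal{N}) = c(\mathcal{N})$, $\hat c^\varepsilon(\emptyset) = 0$, and $\hat c^\varepsilon(S) = c(S) + \varepsilon$ for every nonempty $S \subsetneq \mathcal{N}$. With this definition the semicore constraints of $(\mathcal{N},\hat c^\varepsilon)$, namely $x_i \le \hat c^\varepsilon(\{i\})$ and $x(\mathcal{N}\setminus\{i\}) \le \hat c^\varepsilon(\mathcal{N}\setminus\{i\})$ together with efficiency and non-negativity, are exactly the constraints defining the strong-$\varepsilon$-semicore of $(\mathcal{N},c)$ (recall $f(S)=1$). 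Hence the strong-$\varepsilon$-semicore of $(\mathcal{N},c)$ is nonempty if and only if $Semicore(\mathcal{N},\hat c^\varepsilon) \neq \emptyset$, so that $sOeS(\mathcal{N},c) = \min\{\varepsilon \ge 0 \mid Semicore(\mathcal{N},\hat c^\varepsilon) \ne \emptyset\}$.

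Next I would establish an analogue of Lemma~\ref{lemma:lemmaepsilon}: for every $\varepsilon \ge 0$ the game $(\mathcal{N},\hat c^\varepsilon)$ is subadditive. This is a short case check. For proper subsets $S, S'$ with $S \cup S' \subsetneq \mathcal{N}$ we get $\hat c^\varepsilon(S) + \hat c^\varepsilon(S') = c(S)+c(S')+2\varepsilon \ge c(S\cup S') + \varepsilon = \hat c^\varepsilon(S\cup S')$; when $S \cup S' = \mathcal{N}$ we use $c(S)+c(S') \ge c(\mathcal{N})$ and the spare copy of $\varepsilon$; and the unions involving $\mathcal{N}$ itself are immediate because $\hat c^\varepsilon$ is non-negative. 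Subadditivity is precisely what is needed so that the equivalence of \cite{semicoretsg} applies to $(\mathcal{N},\hat c^\varepsilon)$.

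With subadditivity in hand, I would apply the cited characterization $Semicore(\mathcal{N},\hat c^\varepsilon) = \emptyset \iff \sum_{j}(\hat c^\varepsilon(\mathcal{N}) - \hat c^\varepsilon(\mathcal{N}\setminus\{j\})) > \hat c^\varepsilon(\mathcal{N})$. Substituting $\hat c^\varepsilon(\mathcal{N}) = c(\mathcal{N})$ and $\hat c^\varepsilon(\mathcal{N}\setminus\{j\}) = c(\mathcal{N}\setminus\{j\})+\varepsilon$ and simplifying turns the non-emptiness condition into the single linear inequality $n\varepsilon \ge (n-1)c(\mathcal{N}) - \sum_{j} c(\mathcal{N}\setminus\{j\})$, i.e. $\varepsilon \ge \frac{n-1}{n}c(\mathcal{N}) - \frac{1}{n}\sum_{i} c(\mathcal{N}\setminus\{i\}) =: \bar\varepsilon$. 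Thus $Semicore(\mathcal{N},\hat c^\varepsilon) \ne \emptyset$ exactly when $\varepsilon \ge \bar\varepsilon$, giving $sOeS(\mathcal{N},c) = \max(0, \bar\varepsilon)$. Finally, since $Semicore(\mathcal{N},c)=\emptyset$, the same characterization applied to the original game gives $\sum_j(c(\mathcal{N}) - c(\mathcal{N}\setminus\{j\})) > c(\mathcal{N})$, which rearranges to $\bar\varepsilon > 0$; hence the maximum is attained at $\bar\varepsilon$ and $sOeS(\mathcal{N},c) = \bar\varepsilon$, as claimed.

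A few remarks on where care is needed. The main technical point is the subadditivity of $(\mathcal{N},\hat c^\varepsilon)$, since it is the hypothesis that unlocks the \cite{semicoretsg} equivalence; I would also double-check the identification in the first step, namely that inflating every proper coalition by $\varepsilon$ reproduces exactly the strong relaxation with $f(S)=1$, including the individual-rationality side. One tempting shortcut is to exhibit the explicit allocation $x_i = c(\mathcal{N}) - c(\mathcal{N}\setminus\{i\}) - \bar\varepsilon$, which satisfies efficiency and the marginal constraints with equality; I would avoid this as the primary argument, because its non-negativity is not guaranteed for an arbitrary subadditive game, whereas routing through the cited characterization sidesteps the issue entirely. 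As a consistency check, the resulting formula yields $sOeS(\mathcal{N},c) = \frac{n-1}{n}\,CoSS(\mathcal{N},c)$ with $CoSS$ as in Theorem~\ref{theorem:coss}, paralleling the factor relating $CoS$ and $wOeC$ in Proposition~\ref{prop:epsiloncos}.
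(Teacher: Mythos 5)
Your proposal is correct and follows essentially the same route as the paper's own proof: define the modified game $c^\varepsilon$ with every proper coalition inflated by $\varepsilon$, verify it stays subadditive (the paper cites its Lemma~\ref{lemma:lemmaepsilon} analogue just as you do), and invoke the \cite{semicoretsg} characterization to reduce non-emptiness to a linear inequality in $\varepsilon$. Your presentation is in fact slightly tighter than the paper's two-step argument (lower bound, then verification at the candidate $\varepsilon$), since you read off the non-emptiness condition as $\varepsilon \geq \bar\varepsilon$ directly and explicitly check $\bar\varepsilon > 0$ from the emptiness hypothesis, a point the paper leaves implicit.
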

\begin{proof}
    Let $(\mathcal{N}, c)$ be a subadditive T.U. game such that $Semicore(\mathcal{N}, c) = \emptyset$ and $(\mathcal{N}, c^\varepsilon$) be a game such that $c^\varepsilon(\mathcal{N}) = c(\mathcal{N})$, and $c^\varepsilon(S) = c(S) + \varepsilon$ for $S \subset \mathcal{N}$ and $\varepsilon \in \mathbb{R}_+$. The game $(\mathcal{N}, c^\varepsilon)$ is subadditive (the proof is similar to that of Lemma~\ref{lemma:lemmaepsilon}), and thus, for all $\varepsilon \geq 0$, $Semicore(\mathcal{N}, c^\varepsilon) \neq \emptyset \iff \sum_{j \in \mathcal{N}} \left( c^\varepsilon(\mathcal{N}) - c^\varepsilon(\mathcal{N} \setminus \{j\}) \right) \leq c^\varepsilon(\mathcal{N})$. Let $\varepsilon^* = \min \{ \varepsilon\ |\ Semicore(\mathcal{N}, c^\varepsilon) \neq \emptyset \} = sOeS(\mathcal{N}, c)$. We have:
    {\allowdisplaybreaks
    \begin{align*}
        & \sum_{j \in \mathcal{N}} c^{\varepsilon^*}(\mathcal{N}) - c^{\varepsilon^*}(\mathcal{N} \setminus \{j\}) \leq c^{\varepsilon^*}(\mathcal{N})\\
        \Rightarrow & \sum_{j \in \mathcal{N}} \left( c(\mathcal{N}) - c(\mathcal{N} \setminus \{j\})\right) - n\varepsilon^*  \leq  c(\mathcal{N})\\
        \Rightarrow & \frac{n-1}{n}c(\mathcal{N}) - \frac{1}{n}\sum_{j \in \mathcal{N}} c(\mathcal{N} \setminus \{j\}) \leq \varepsilon^*
    \end{align*}
    }
    Let $\varepsilon = \frac{n-1}{n}c(\mathcal{N}) - \frac{1}{n}\sum_{j \in \mathcal{N}} c(\mathcal{N} \setminus \{j\})$. We proceed to prove $Semicore(\mathcal{N}, c^\varepsilon) \neq \emptyset$.
    {\allowdisplaybreaks
    \begin{align*}
        & \sum_{j \in \mathcal{N}} \left(c^{\varepsilon}(\mathcal{N}) - c^\varepsilon(\mathcal{N} \setminus \{j\})\right) & = & \sum_{j \in \mathcal{N}} \left( c(\mathcal{N}) - c(\mathcal{N} \setminus \{j\})\right) - n\varepsilon\\
        & & = & \sum_{j \in \mathcal{N}} \left( c(\mathcal{N}) - c(\mathcal{N} \setminus \{j\})\right) - n\left( \frac{n-1}{n}c(\mathcal{N}) - \frac{1}{n}\sum_{j \in \mathcal{N}} c(\mathcal{N} \setminus \{j\}) \right)\\
        & & = & c(\mathcal{N}) = c^\varepsilon(\mathcal{N})
    \end{align*}
    }
    Therefore $Semicore(\mathcal{N}, c^\varepsilon) \neq \emptyset$, hence $\varepsilon^* = \varepsilon$.
\end{proof}

We deduce from Theorems~\ref{theorem:coss} and~\ref{theorem:epsilonsemicore} the following property:
\begin{align*}
    CoSS(\mathcal{N}, c) = \frac{n}{n-1} \cdot sOeS(\mathcal{N}, c)
\end{align*}
Hence, the larger the number of players in the game, the closer the cost of semicore stability is to the optimal $\varepsilon$-semicore.

\section{Bounds of the cost of (semi)core stability}\label{sec:methodboundcoss}

In this section we focus on bounds of the cost of core and semicore stability. Recall that the cost of (semicore) stability is the smallest amount of money one needs to subsidize the grand coalition with to allow stability. We first generalize a method used in the literature to bound the cost of stability in the particular case of the TSG (Section~\ref{sec:generalmethodbound}) and we extend this method to bound the cost of semicore stability. Given that the decision problem of the TSP remains an NP-complete problem, we aim to bound the cost of semicore stability of the TSP using a polynomial formula (Section~\ref{sec:polybounds}). 

\subsection{A general method}\label{sec:generalmethodbound}

We can construct a method to find bounds for the cost of stability of a game $(\mathcal{N}, c)$ as follows:
\begin{enumerate}[(a)]
    \item Propose a cost function game $(\mathcal{N}, \Bar{c})$.
    \item Prove that $\Bar{c}(S) \leq c(S)$ for all $S \subseteq \mathcal{N}$.
    \item Prove that $Core(\mathcal{N}, \Bar{c}) \neq \emptyset$.
    \item Bound the ratio $\frac{c(\mathcal{N})}{\Bar{c}(\mathcal{N})}$, \textit{i.e.,} $\frac{c(\mathcal{N})}{\Bar{c}(\mathcal{N})} \leq \alpha$, or, equivalently, bound the difference $c(\mathcal{N}) - \Bar{c}(\mathcal{N}) \leq c(\mathcal{N}) - \frac{1}{\alpha}c(\mathcal{N})$, with $\alpha \geq 1$.
    \item Conclude $CoS(\mathcal{N}, c) \leq c(\mathcal{N}) - \frac{1}{\alpha}c(\mathcal{N})$.
\end{enumerate}
We reach the conclusion (e) whenever premises (b), (c) and (d) hold for some game $(\mathcal{N}, \bar{c})$, by the fact that an allocation $x = (x_i)_{i \in \mathcal{N}} \in Core(\mathcal{N}, \Bar{c})$ (which must exist by (c)) is such that $x(S) \leq \Bar{c}(S) \leq c(S)$ for all $S \subseteq \mathcal{N}$ (by (b)), and $x(\mathcal{N}) = \Bar{c}(\mathcal{N}) = c(\mathcal{N}) - (c(\mathcal{N}) - \Bar{c}(\mathcal{N}))$. Hence, if we set $\varepsilon = c(\mathcal{N}) - \Bar{c}(\mathcal{N})$, along with vector $x$, we obtain a feasible solution in the polyhedron constraining the cost of stability of game $(\mathcal{N}, c)$, given in Table~\ref{tab:approxfair}. Thus $CoS(\mathcal{N}, c) \leq \varepsilon = c(\mathcal{N}) - \Bar{c}(\mathcal{N})$ which implies that $CoS(\mathcal{N}, c) \leq c(\mathcal{N}) - \frac{1}{\alpha}c(\mathcal{N})$ (by (d)).

A more precise example of this method is the following \citep{alphacoretsg}: by using the linear relaxation of the TSP to compute the game $(\mathcal{N}, \Bar{c})$, proving this game has a non-empty core, and proving that the integrality gap is upper bounded  by $\frac{3}{2}$, one obtains that the cost of stability of a TSG $(\mathcal{N}, c)$ is \sylvain{upper} bounded by $\frac{1}{3}c(\mathcal{N})$. Theorem \ref{theorem:boundcosst} illustrates this procedure by proving a weaker bound than the one \sylvain{provided by \cite{alphacoretsg}}, although it serves the purpose of showing another example of how the method can be used. The proof of Theorem~\ref{theorem:boundcosst} relies on the Minimum Cost Spanning Tree (MCST) game, where the objective is to allocate the cost of a minimum cost spanning tree with one depot and $n$ locations to the $n$ locations \citep{mcstg}. If the conjectured Held-Karp bound $\frac{4}{3}$ for the TSP \citep{heldkarpbound} were to be proven, this same method would lead to bound the cost of stability of TSGs by $\frac{1}{4}c(\mathcal{N})$.
\begin{theorem}\label{theorem:boundcosst}
        Let $(\mathcal{N}, c)$ be a Traveling Salesman Game induced by a distance matrix $d = (d_{i,j})_{i, j \in \mathcal{N}}$. We have: $CoS(\mathcal{N}, c) \leq \frac{1}{2}c(\mathcal{N})$. In other words, the cost of stability of a Traveling Salesman Game is at most half the cost of the grand coalition.
\end{theorem}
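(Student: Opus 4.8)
The plan is to instantiate the general five-step method (a)--(e) stated just above, taking the Minimum Cost Spanning Tree (MCST) game in the role of $(\mathcal{N}, \bar{c})$ and aiming for $\alpha = 2$. Concretely, for each coalition $S \subseteq \mathcal{N}$ I would define $\bar{c}(S)$ to be the cost, with respect to $d$, of a minimum cost spanning tree on the vertex set $S \cup \{0\}$. This is the natural choice because the MST and the optimal tour are related by a factor of two, which is exactly the value of $\alpha$ we are after.

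For step (b), namely $\bar{c}(S) \leq c(S)$ for all $S$, I would argue that an optimal Hamiltonian circuit on $S \cup \{0\}$ becomes a spanning tree (in fact a Hamiltonian path) once any single arc is deleted; since arc costs are nonnegative this path costs at most $c(S)$, and a minimum spanning tree costs at most this particular path, so $\bar{c}(S) \leq c(S)$. For step (c) I would invoke the classical fact that the MCST game has a nonempty core \citep{mcstg} (an explicit stable allocation charges each player the cost of the edge linking it to the rest of a minimum spanning tree), so I would cite this rather than reprove it.

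Step (d) is the crux and the main obstacle. Here I must establish $\frac{c(\mathcal{N})}{\bar{c}(\mathcal{N})} \leq 2$, i.e. $c(\mathcal{N}) \leq 2\,\bar{c}(\mathcal{N})$, via the standard double-tree argument: take a minimum spanning tree $T$ on $\mathcal{N} \cup \{0\}$, duplicate every edge to obtain a connected Eulerian multigraph of total cost $2\,\bar{c}(\mathcal{N})$, traverse an Eulerian circuit, and then shortcut repeated vertices. The triangle inequality (assumed throughout the paper) guarantees that shortcutting never increases the cost, so the resulting Hamiltonian circuit costs at most $2\,\bar{c}(\mathcal{N})$; since $c(\mathcal{N})$ is the optimum over all Hamiltonian circuits, $c(\mathcal{N}) \leq 2\,\bar{c}(\mathcal{N})$ follows. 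The delicate point I would flag is that this shortcutting argument is inherently symmetric: doubling each tree edge and traversing it in both directions implicitly uses both $d_{i,j}$ and $d_{j,i}$, so the clean factor of two is immediate for symmetric distances and relies on the symmetric reading of $d$. I would therefore make the symmetry of $d$ explicit (or restrict the statement to the STSG) at precisely this step, since for a genuinely asymmetric matrix the MST/double-tree bound does not carry over unchanged.

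Finally, step (e) is immediate: with $\alpha = 2$ the general method yields $CoS(\mathcal{N}, c) \leq c(\mathcal{N}) - \frac{1}{2}c(\mathcal{N}) = \frac{1}{2}c(\mathcal{N})$, which is the claim. I expect essentially all of the real work to reside in the factor-two verification of step (d) (and in pinning down the symmetry hypothesis it needs); steps (b) and (e) are routine, and step (c) reduces to a citation.
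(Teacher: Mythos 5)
Your proposal is correct and follows essentially the same route as the paper's own proof: the MCST game plays the role of $(\mathcal{N},\bar{c})$, step (b) is done by deleting an edge from the optimal tour, step (c) is a citation of the non-emptiness of the MCST-game core, and step (d) is the doubled-tree/Eulerian-shortcut argument giving $c(\mathcal{N}) \leq 2\,\bar{c}(\mathcal{N})$. The one point where you go beyond the paper is the symmetry flag: the theorem is stated for a general (possibly asymmetric) distance matrix, yet the doubled-tree argument --- and even the definition of a minimum spanning tree under asymmetric arc costs --- implicitly uses the symmetric reading of $d$, a hypothesis the paper's proof never makes explicit, so your suggestion to state it (or restrict to STSGs) is a legitimate sharpening rather than a deviation.
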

\begin{proof}
This proof follows the structure of the method provided at the beginning of Section~\ref{sec:generalmethodbound}.
\begin{enumerate}[(a)]
    \item Let $(\mathcal{N}, c^{st})$ be a Minimum Cost Spanning Tree Game (MCSTG) induced by distance matrix $d$.
    \item When removing an edge from a Hamiltonian tour, we obtain a spanning tree. This spanning tree has a cost larger than or equal to a minimum cost spanning tree. Hence we deduce that $c^{st}(S) \leq c(S)$ for all $S \subseteq \mathcal{N}$ (since $c(S)$ is the cost of a Hamiltonian tour on locations $S \cup \{0\}$  and $c^{st}(S)$ is the cost of a minimum spanning tree on the same set of locations).
    \item It is known \citep{mcstg} that every 
    MCSTG $(\mathcal{N}, c')$ is such that $Core(\mathcal{N}, c') \neq \emptyset$.
    \item Consider $\mathcal{T} = (v_0, v_1, ..., v_k, v_0)$, a Eulerian tour on a graph obtained by duplicating all edges of a minimum cost spanning tree on vertices $\mathcal{N} \cup \{0\}$. We denote \sylvain{by} $cost_d(H)$ the cost of some graph $H$ weighted by distance matrix $d$. We have $cost_d(\mathcal{T}) = 2 \cdot c^{st}(\mathcal{N})$. Let $\mathcal{C}$ be a Hamiltonian tour obtained by iterating over $\mathcal{T}$, by only keeping each vertex in the order of their first appearance in $\mathcal{T}$. As in the classical 2-approximation of the TSP \citep{tsp2approx}, one can prove that $cost_d(\mathcal{C}) \leq cost_d(\mathcal{T})$ (because of the triangle inequality). $\mathcal{C}$, being a Hamiltonian tour, has a cost that is larger than or equal to a minimum cost Hamiltonian tour. Hence $c(\mathcal{N}) \leq cost_d(\mathcal{C})$. Hence $c(\mathcal{N}) \leq cost_d(\mathcal{C}) \leq cost_d(\mathcal{T}) = 2 \cdot c^{st}(\mathcal{N})$. We conclude that $c(\mathcal{N}) \leq 2 \cdot c^{st}(\mathcal{N})$
    \item Let $x = (x_i)_{i \in \mathcal{N}} \in Core(\mathcal{N}, c^{st})$. From the previous result, we deduce that $\frac{1}{2}c(\mathcal{N}) \leq \sum_{i \in \mathcal{N}} x_i = c^{st}(\mathcal{N}) \leq c(\mathcal{N})$. Since $x \in Core(\mathcal{N}, c^{st})$, we have that $\sum_{i \in S} x_i \leq c^{st}(S)$ for every $S \subseteq \mathcal{N}$, and as we stated before, $c^{st}(S) \leq c(S)$ for every $S \subseteq \mathcal{N}$, hence $\sum_{i \in S} x_i \leq c(S)$ for every $S \subseteq \mathcal{N}$. Then, if we set $\varepsilon = c(\mathcal{N}) - c^{st}(\mathcal{N})$, along with $x \in Core(\mathcal{N}, c^{st})$, we obtain a feasible solution in the polyhedron representing the cost of stability.
    Finally, $c^{st}(\mathcal{N}) \geq \frac{1}{2} c(\mathcal{N}) \Rightarrow c(\mathcal{N}) - c^{st}(\mathcal{N}) \leq c(\mathcal{N}) - \frac{1}{2}c(\mathcal{N}) \Rightarrow \varepsilon \leq \frac{1}{2}c(\mathcal{N})$. 
    By the fact that $CoS(\mathcal{N}, c) \leq \varepsilon$, we conclude $CoS(\mathcal{N}, c) \leq \frac{1}{2}c(\mathcal{N})$.        
\end{enumerate}
\end{proof}
For the semicore stability, since $CoSS(\mathcal{N}, c) \leq CoS(\mathcal{N}, c)$, any upper bound of the cost of stability is also an upper bound of the cost of semicore stability. Besides, one could adapt the method as follows:
\begin{enumerate}[(a)]
    \item Propose a cost function game $(\mathcal{N}, \Bar{c})$.
    \item Prove that $\Bar{c}(S) \leq c(S)$ for all $S \subseteq \mathcal{N}$.
    \item Prove that there exists an allocation vector of $(\mathcal{N}, \Bar{c})$ (\textit{i.e.,} a vector $(x_i)_{i \in \mathcal{N}}$ such that $\sum_{i \in \mathcal{N}} x_i = \Bar{c}(\mathcal{N})$), which satisfies the individual rationality and the marginal contribution constraints of the game $(\mathcal{N}, c)$.
    \item Bound the difference $c - \Bar{c} \leq \beta(\Bar{c}, c, \mathcal{N})$.
    \item Conclude that $CoSS(\mathcal{N}, c) \leq \beta(\Bar{c}, c, \mathcal{N})$.
\end{enumerate}
We can prove that this method works using the same arguments as the ones we used before for the cost of stability. Theorem~\ref{theorem:cossbound2} illustrates this method for the class of subadditive games (more general than TSGs). However, instead of finding a precise cost function game, we simply find an allocation vector $(x_i)_{i \in \mathcal{N}}$ such that $\sum_{i \in \mathcal{N}} x_i = c(\mathcal{N} \setminus \{M\})$ for some $M \in \mathcal{N}$ and prove it satisfies the individual rationality and marginal contribution constraints on $(\mathcal{N}, c)$.
\begin{theorem}\label{theorem:cossbound2}
    Let $(\mathcal{N}, c)$ be a subadditive game. If $Semicore(\mathcal{N}, c) = \emptyset$, then:\\ $CoSS(\mathcal{N}, c) \leq \max_{j \in \mathcal{N}} (c(\mathcal{N}) - c(\mathcal{N} \setminus \{j\}))$. In other words, the cost of semicore stability is smaller or equal than the maximum marginal cost.
\end{theorem}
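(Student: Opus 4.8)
The plan is to follow the adapted five-step method stated just before the theorem, namely to exhibit an explicit feasible allocation for a suitably reduced game. Write $M_j = c(\mathcal{N}) - c(\mathcal{N}\setminus\{j\})$ for the marginal cost of player $j$, and let $M \in \mathcal{N}$ attain $\max_{j} M_j$. The crucial observation is that maximizing the marginal cost is the same as minimizing $c(\mathcal{N}\setminus\{j\})$, so $c(\mathcal{N}\setminus\{M\}) = \min_{j\in\mathcal{N}} c(\mathcal{N}\setminus\{j\})$. I would then set $\varepsilon = M_M = c(\mathcal{N}) - c(\mathcal{N}\setminus\{M\})$ and consider the reduced game $(\mathcal{N}, c^\varepsilon)$ with $c^\varepsilon(\mathcal{N}) = c(\mathcal{N}) - \varepsilon = c(\mathcal{N}\setminus\{M\})$ and $c^\varepsilon(S) = c(S)$ for $S \subset \mathcal{N}$, exactly as in Lemma~\ref{lemma:lemmaepsilon}. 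Since $CoSS(\mathcal{N}, c) = \min\{\varepsilon \geq 0 \mid Semicore(\mathcal{N}, c^\varepsilon) \neq \emptyset\}$, it suffices to show $Semicore(\mathcal{N}, c^\varepsilon) \neq \emptyset$, which will give $CoSS(\mathcal{N}, c) \leq \varepsilon = \max_{j} M_j$.

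Next I would construct the allocation. I want a vector $x = (x_i)_{i\in\mathcal{N}}$ with $x_i \geq 0$, $\sum_{i} x_i = c(\mathcal{N}\setminus\{M\})$, and $x_i \leq c(\{i\})$ for every $i$. Such a vector exists because, by iterating subadditivity of $(\mathcal{N}, c)$, we have $c(\mathcal{N}\setminus\{M\}) \leq \sum_{i\neq M} c(\{i\}) \leq \sum_{i\in\mathcal{N}} c(\{i\})$, so the individual-rationality upper bounds leave enough total room to distribute $c(\mathcal{N}\setminus\{M\})$ while keeping every coordinate non-negative; for instance one may split the total proportionally to the $c(\{i\})$, which keeps each $x_i$ at most $c(\{i\})$.

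I would then verify $x \in Semicore(\mathcal{N}, c^\varepsilon)$ through its three defining constraints. Efficiency holds by construction, since $\sum_i x_i = c(\mathcal{N}\setminus\{M\}) = c^\varepsilon(\mathcal{N})$, and individual rationality holds since $x_i \leq c(\{i\}) = c^\varepsilon(\{i\})$. The marginal-contribution (equivalently $(n-1)$-coalition stability) constraints are where the choice of $M$ pays off: for every $i$,
\[
x(\mathcal{N}\setminus\{i\}) = x(\mathcal{N}) - x_i \leq x(\mathcal{N}) = \min_{j} c(\mathcal{N}\setminus\{j\}) \leq c(\mathcal{N}\setminus\{i\}) = c^\varepsilon(\mathcal{N}\setminus\{i\}),
\]
using only $x_i \geq 0$. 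Hence $x$ lies in the semicore of $c^\varepsilon$ and the stated bound follows.

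The main obstacle, or rather the single idea that renders everything else routine, is recognizing that $M$ must be the player of maximum marginal cost: this makes $x(\mathcal{N})$ equal to the \emph{smallest} of the $(n-1)$-coalition costs, so every such coalition is automatically under-charged no matter how the total is split, leaving only the requirement that a non-negative, individually rational split of that total exist, which subadditivity guarantees. As a cross-check I would also confirm the bound directly from the exact formula of Theorem~\ref{theorem:coss}: substituting $c(\mathcal{N}\setminus\{j\}) = c(\mathcal{N}) - M_j$ yields $CoSS(\mathcal{N}, c) = \frac{1}{n-1}\left(\sum_{j} M_j - c(\mathcal{N})\right)$, and since $\sum_{j} M_j \leq n\max_{j} M_j$ together with $\max_{j} M_j \leq c(\mathcal{N})$, this quantity is at most $\max_{j} M_j$.
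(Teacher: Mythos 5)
Your proof is correct and its main construction is essentially the paper's own: the same choice of $M$ (maximum marginal cost, equivalently minimum $c(\mathcal{N}\setminus\{j\})$), the same reduced game whose grand-coalition cost is $c(\mathcal{N}\setminus\{M\})$, the same proportional split of that total, and the same verification that every coalition of size $n-1$ is automatically under-charged because $x(\mathcal{N}) = \min_j c(\mathcal{N}\setminus\{j\})$. Two differences are worth recording. First, your feasibility argument for the proportional split is more careful than the paper's: you iterate subadditivity to get $c(\mathcal{N}\setminus\{M\}) \leq \sum_{i\neq M} c(\{i\}) \leq \sum_{i\in\mathcal{N}} c(\{i\})$, whereas the paper invokes $\sum_j c(\{j\}) \geq c(\mathcal{N}) \geq c(\mathcal{N}\setminus\{M\})$ ``by subadditivity and positivity''; the second inequality there is a monotonicity claim that subadditivity alone does not yield (it does hold here, but only because the empty-semicore hypothesis forces $\max_j \bigl(c(\mathcal{N}) - c(\mathcal{N}\setminus\{j\})\bigr) > 0$), so your route is cleaner. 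Second, your closing ``cross-check'' is in fact a complete, self-contained, and shorter proof: Theorem~\ref{theorem:coss} gives $CoSS(\mathcal{N}, c) = \frac{1}{n-1}\bigl(\sum_{j} M_j - c(\mathcal{N})\bigr)$, and since $c(\mathcal{N}) \geq \max_j M_j$ (costs being non-negative), one gets $\sum_j M_j - c(\mathcal{N}) \leq (n-1)\max_j M_j$, hence the bound; this two-line argument could replace the entire explicit construction, at the price of leaning on Theorem~\ref{theorem:coss} rather than illustrating the general bounding method the paper is showcasing.
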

\begin{proof}
    As a reminder, the cost of semicore stability of a game $(\mathcal{N}, c)$ is the optimal solution to the following MIP:
    {\allowdisplaybreaks
    \begin{align}
        \min \quad \varepsilon\\
        \text{ \textbf{s.t.} } & \sum_{i \in \mathcal{N}} x_i = c(\mathcal{N}) - \varepsilon\\
        & \sum_{j \in \mathcal{N} \setminus \{i\}} x_j \leq c(\mathcal{N} \setminus \{i\}) & \forall i \in \mathcal{N}\\
        & x_i \leq c(\{i\}) & \forall i \in \mathcal{N}\\
        & x_i \geq 0 & \forall i \in \mathcal{N}
    \end{align}
    }
    This proof follows the structure of the proposed method. 
    \begin{enumerate}[(a)]
        \item Let $(\mathcal{N}, c)$ be a game with an empty semicore, let $M \in \mathcal{N}$ be the player with maximum marginal cost, and $(\mathcal{N}, \Bar{c})$ be a game such that $\bar{c}(\mathcal{N}) = c(\mathcal{N} \setminus \{M\})$ and $\Bar{c}(S) = c(S)$ for all $S \subset \mathcal{N}$.
        \item Notice that $\Bar{c}(S) \leq c(S)$ for all $S \subseteq \mathcal{N}$ (we only decreased the value of the grand coalition)
        \item Let $\varepsilon = \max_{j \in \mathcal{N}} c(\mathcal{N}) - c(\mathcal{N} \setminus \{j\}) = c(\mathcal{N}) - c(\mathcal{N} \setminus \{M\})$. Since $\sum_{j \in \mathcal{N}} c(\{j\}) \geq c(\mathcal{N}) \geq c(\mathcal{N} \setminus \{M\})$ (by subadditivity and the fact that costs are positive), then the allocation vector $(x_i)_{i \in \mathcal{N}} = \left(\frac{c(\mathcal{N} \setminus \{M\})}{\sum_{j \in \mathcal{N}} c(\{j\})}c(\{i\})\right)_{i \in \mathcal{N}}$ satisfies the individual rationality constraints of the game $(\mathcal{N}, c)$.  We also have the efficiency constraint:
        \begin{multline}
            \sum_{i \in \mathcal{N}} \frac{c(\mathcal{N} \setminus \{M\})}{\sum_{j \in \mathcal{N}} c(\{j\})}c(\{i\}) = \frac{c(\mathcal{N} \setminus \{M\})}{\sum_{j \in \mathcal{N}} c(\{j\})} \sum_{i \in \mathcal{N}} c(\{i\})\\ = c(\mathcal{N} \setminus \{M\}) = c(\mathcal{N}) - \varepsilon
        \end{multline}
        Moreover, for all $i \in \mathcal{N}$, $\sum_{j \in \mathcal{N} \setminus \{i\}} x_j \leq \sum_{j \in \mathcal{N}} x_j = c(\mathcal{N} \setminus \{M\}) \leq c(\mathcal{N} \setminus \{i\})$. Hence $x$ satisfies the marginal contribution constraints of game $(\mathcal{N}, c)$. 
        \item One has directly $c(\mathcal{N}) - \Bar{c}(\mathcal{N}) = c(\mathcal{N}) - c(\mathcal{N} \setminus \{M\})$.
        \item Since $\varepsilon = c(\mathcal{N}) - c(\mathcal{N} \setminus \{M\})$, along with vector $x$, provide a feasible solution in the polyhedron representing the cost of semicore stability, we can conclude $CoSS(\mathcal{N}) \leq \varepsilon = c(\mathcal{N}) - c(\mathcal{N} \setminus \{M\})$.
    \end{enumerate}
\end{proof}

\subsection{Polynomial bounds for the cost of semicore stability of TSGs}\label{sec:polybounds}

The formula for the exact value of the cost of stability given in Theorem \ref{theorem:coss} requires to compute, in the case of TSGs, the optimal tour for every coalition of size $n - 1$ besides the optimal tour for the grand coalition. This corresponds to $n$ computations of TSPs on $n$ locations, and one on $n + 1$ locations (a NP-hard problem). We provide in Proposition \ref{prop:bound} an easy to understand and polynomially computable bound for the cost of semicore stability, which allows whoever wants to subsidize the association to easily estimate how much value they would need to provide in order to create semicore stability. 
\begin{proposition}\label{prop:bound}
    Let $(\mathcal{N}, c)$ be a TSG such that $Semicore(\mathcal{N}, c) = \emptyset$. Let $M \in \mathcal{N}$ be such that $c(\{M\}) \geq c(\{i\})$ for all $i \in \mathcal{N}$. We have that $CoSS(\mathcal{N}, c) \leq \frac{1}{n - 1}\sum_{j \in \mathcal{N} \setminus \{M\}} (d_{0,j} + d_{j,0})$. In other words, the cost of stability does not \sylvain{exceed} the average of the $n - 1$ smallest individual rationalities.
\end{proposition}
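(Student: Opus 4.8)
The plan is to combine the exact formula from Theorem~\ref{theorem:coss} with two elementary metric facts about the TSP. First I would recall that for a single player $c(\{j\})$ is the cost of the two-node circuit $0 \to j \to 0$, so $c(\{j\}) = d_{0,j} + d_{j,0}$; thus the claimed bound is exactly $\frac{1}{n-1}\sum_{j \neq M} c(\{j\})$, the average of the $n-1$ smallest individual rationalities (recall $M$ has the largest one). Writing $m_j = c(\mathcal{N}) - c(\mathcal{N} \setminus \{j\})$ for the marginal cost of player $j$ and substituting $c(\mathcal{N}\setminus\{j\}) = c(\mathcal{N}) - m_j$ into the formula of Theorem~\ref{theorem:coss}, a short calculation rewrites the cost of semicore stability as $CoSS(\mathcal{N}, c) = \frac{1}{n-1}\bigl(\sum_{j \in \mathcal{N}} m_j - c(\mathcal{N})\bigr)$.

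The heart of the argument is then to bound each marginal cost by the corresponding individual rationality, i.e. to show $m_j \leq d_{0,j} + d_{j,0}$. I would prove this by an insertion argument: starting from an optimal Hamiltonian circuit on $(\mathcal{N}\setminus\{j\}) \cup \{0\}$, insert $j$ immediately after the depot. If the depot's successor in that circuit is $v$, the insertion replaces the arc $0 \to v$ by the pair $0 \to j \to v$, increasing the cost by $d_{0,j} + d_{j,v} - d_{0,v}$, which by the triangle inequality $d_{j,v} \leq d_{j,0} + d_{0,v}$ is at most $d_{0,j} + d_{j,0}$. Since this produces a feasible (not necessarily optimal) circuit on $\mathcal{N} \cup \{0\}$, we get $c(\mathcal{N}) \leq c(\mathcal{N}\setminus\{j\}) + d_{0,j} + d_{j,0}$, that is $m_j \leq d_{0,j}+d_{j,0} = c(\{j\})$. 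This insertion step is the main obstacle, or at least the only non-routine ingredient, and it is exactly where the triangle inequality (hence the metric structure of the TSG) enters; note that the argument is valid in the asymmetric case as well, so no symmetry of $d$ is needed.

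To finish, summing the bound over all players gives $\sum_{j} m_j \leq \sum_{j} c(\{j\}) = \sum_{j \neq M} c(\{j\}) + c(\{M\})$. I would then invoke monotonicity of the metric TSP cost, namely $c(\{M\}) \leq c(\mathcal{N})$, which follows by shortcutting an optimal circuit on $\mathcal{N} \cup \{0\}$ down to the two-node circuit on $\{M,0\}$ and applying the triangle inequality, so that $\sum_j m_j - c(\mathcal{N}) \leq \sum_{j\neq M} c(\{j\}) + c(\{M\}) - c(\mathcal{N}) \leq \sum_{j \neq M} c(\{j\})$. Plugging this into the rewritten formula yields $CoSS(\mathcal{N}, c) \leq \frac{1}{n-1}\sum_{j\neq M}(d_{0,j}+d_{j,0})$, as desired. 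An alternative route would be to follow the general method (a)--(e) of Section~\ref{sec:generalmethodbound} by exhibiting an explicit allocation summing to $c(\mathcal{N}) - \frac{1}{n-1}\sum_{j\neq M}c(\{j\})$ that satisfies individual rationality and the marginal contribution constraints; but the formula-based route seems cleaner, since the existence of a suitable allocation is already guaranteed by Theorem~\ref{theorem:coss}, and the two TSG-specific inequalities above do all the remaining work.
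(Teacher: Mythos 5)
Your proof is correct, and it shares its skeleton with the paper's own proof: both start from the exact formula of Theorem~\ref{theorem:coss} and bound the relevant terms by subadditivity. Indeed, your inequality $m_j \leq c(\{j\})$ is exactly the subadditivity inequality $c(\mathcal{N}) \leq c(\mathcal{N} \setminus \{j\}) + c(\{j\})$, which the paper invokes directly (TSGs whose distance matrix satisfies the triangle inequality are subadditive, as recalled in Section~\ref{sec:def}); your insertion argument re-proves this available fact, which is harmless but not needed. The genuine divergence is in how the term for player $M$ is handled. The paper applies subadditivity only to the $n-1$ players $j \neq M$ and simply discards the nonnegative term $c(\mathcal{N} \setminus \{M\})$ from the subtracted sum; consequently its argument uses nothing beyond subadditivity and nonnegativity of costs, so the bound $CoSS(\mathcal{N}, c) \leq \frac{1}{n-1}\sum_{j \neq M} c(\{j\})$ it establishes holds for \emph{every} subadditive game, with the metric structure entering only in the last line $c(\{j\}) = d_{0,j} + d_{j,0}$. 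You instead apply subadditivity to all $n$ players and then cancel $c(\{M\})$ against $c(\mathcal{N})$ via the monotonicity $c(\{M\}) \leq c(\mathcal{N})$; your shortcutting proof of that step is fine for metric TSGs, but monotonicity is \emph{not} implied by subadditivity alone (e.g. $c(\{1\}) = c(\{2\}) = 10$, $c(\{1,2\}) = 5$ is subadditive yet $c(\{1\}) > c(\{1,2\})$), so your route is genuinely tied to the TSG/metric setting while the paper's is not. In short: both arguments are correct and equally short; the paper's is slightly more general, while yours is fully self-contained in that it derives all the needed inequalities from the triangle inequality rather than citing subadditivity of TSGs.
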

\begin{proof}
Let $(\mathcal{N}, c)$ be a TSG such that $Semicore(\mathcal{N}, c) = \emptyset$ and $M \in \mathcal{N}$ the player with the largest individual rationality. By Theorem \ref{theorem:coss}, we have:\\
$CoSS(\mathcal{N}, c) \quad$
{\allowdisplaybreaks
\begin{align*}
     = \quad & c(\mathcal{N}) - \frac{1}{n - 1}\sum_{j \in \mathcal{N}} c(\mathcal{N} \setminus \{j\}) \\
     \quad = \quad & \frac{(n-1)c(\mathcal{N})}{n-1} - \frac{1}{n - 1} \sum_{j \in \mathcal{N}} c(\mathcal{N} \setminus \{j\})\\
     \quad \leq \quad & \frac{1}{n-1}\left(\sum_{j \in \mathcal{N} \setminus \{M\}} \left( c(\mathcal{N} \setminus  \{j\}) + c(\{j\})\right) - \sum_{j \in \mathcal{N} \setminus \{M\}} c(\mathcal{N} \setminus \{j\})\right) & \text{(by subadditivity)}\\
     \quad = \quad & \frac{1}{n - 1}\left(\sum_{j \in \mathcal{N} \setminus \{M\}} c(\{j\})\right)\\
     \quad = \quad & \frac{1}{n - 1}\left(\sum_{j \in \mathcal{N} \setminus \{M\}} \left( d_{0, j} + d_{j, 0} \right) \right)
\end{align*}
}
\end{proof}
\section{Conclusion}

In this paper we investigated approximate stability concepts on subadditive T.U. games, by first identifying, in the literature, methods to approximate core allocations whenever the core is empty, and then establishing the link between such methods. Through a short presentation of the context of application of this research, we justified the use of semicore allocations. We stated that, in a context of collaborative transportation in a Short Food Supply Chain where a group of farmers shares a vehicle and a client, and where the cost of a TSP must be allocated to the farmers, reaching a stable cost allocation (\textit{i.e.} where no subset of farmers wishes to deviate) might become impractical. The semicore, being a relaxation of the core where only the stability constraints of coalitions of size $1$ and $n - 1$ are taken into account, was the solution concept we chose for this study. It has been proven in the literature that the semicore of TSGs can be empty and thus we proposed to extend all the methods of approximate stability to the semicore. We properly defined the cost of semicore stability and the $\varepsilon$-semicore of subadditive T.U. games, and provided an exact formula to compute their respective optimums. We identified in the literature a method to bound the cost of (core) stability. A generalization of the method, along with its translation to the concept of cost of semicore stability was provided. We applied the general method to TSGs and then provided polynomial and simple bounds for the cost of semicore stability of TSGs.


\paragraph*{Future research}
The bounds for the cost of (semicore) stability mentioned in this paper can be improved. We mentioned for instance the Held-Karp bound, which if it happened to be proven to be $\frac{4}{3}$ would allow us to bound the cost of stability by $\frac{1}{4}$ of the cost of the grand coalition. Besides, given that the semicore is less constraining than the core, we could explore other ways to better bound the cost of semicore stability.\\ 
In practice, the results presented in this paper were used as a discussion tool between the association and the local government. However, the transportation problem faced by the association at each delivery day is not a simple TSP. In reality, there are multiple clients, all orders must go first through a depot in order to condition them, the vehicle does not have an unlimited capacity and there are time windows that must be taken into account. The complexity also rises from the fact that, if we wanted to compute a stable allocation, a precise cost function would be required. This is a problem first by the fact that it has a worse time complexity than the TSP (knowing the decision problem of the TSP is NP-hard). Secondly, it is possible that some subset of the farmers find the fact of not using a depot more practical, less time consuming and less expensive, and thus the estimated cost $c(S)$ of such subset $S \subseteq \mathcal{N}$ changes its definition depending on whether the subset wants to use a depot or not. If it is not the case, we would be facing a Pickup and Delivery problem with Time Windows. This makes the real life situation more complex. Can the same method used to bound the cost of (semicore) stability of TSGs in this paper be used in this real life problem?
\section*{Acknowledgement}

This work has been partially supported by the LabEx PERSYVAL-Lab (ANR-11-LABX-0025-01) funded by the French program Investissement d’avenir.

\bibliography{cas-refs}

\end{document}